\renewcommand{\Re}{\mathop {\rm Re}\nolimits}
\newtheorem{theorem}{Theorem}
  \newtheorem{lemma}[theorem]{Lemma}
\begin{document}
\title{Rational methods for abstract linear, non-homogeneous problems without order reduction}

\author{C. Arranz-Simón\footnote{IMUVA-Mathematics Research Institute, Facultad de Ciencias, University of Valladolid, 47011 Valladolid, Spain, email: carlos.arranz@uva.es.}  \hspace{2pt}  and C.  Palencia\footnote{IMUVA-Mathematics Research Institute, ETSI de Telecomunicación, University of Valladolid, 47011 Valladolid, Spain, email: cesar.palencia@tel.uva.es.}}

\maketitle

\begin{abstract}
Starting from an A-stable  rational approximation to $\text{e}^z$ of order $p$,
$$r(z)= 1+ z+ \cdots + z^p/ p! + O(z^{p+1}),$$   
families of stable methods are proposed to time discretize abstract  IVP's of the type $u'(t) = A u(t) + f(t)$.
These  numerical procedures turn out to be of order $p$, thus overcoming the order reduction phenomenon, and only one evaluation of $f$ per step is required.
\end{abstract}

\paragraph{2000 Mathematical subject classifications\,:} 65J10 ; 65M20 ; 65M12

\noindent{\bf Keywords\,:}{order reduction, rational methods,Runge-Kutta methods, partial differential equations, abstract evolution equations}

\section{Introduction} We are concerned with the numerical time integration of abstract, linear, non-homogeneous IVPs of the form
\begin{equation}
\label{IVP}
\left\{ \begin{array}{lcl} u'(t) & = & A u(t) + f(t), \quad t \ge 0, \\
u(0)  & = & u_0. 
\end{array} \right.
\end{equation}
It is assumed that (i) $ A: D(A) \subset X \to X$ is the infinitesimal generator of a $\mathcal{C}_0$ semigroup $S_A(t)$, $ t \ge 0$, of linear and bounded operators in a complex Banach space $X$, (ii) $f : [0,+\infty) \to X$ is a continuous source data and  (iii) $u_0 \in X$ is a given initial value. Let us notice that the situation of a source term $f$  defined only in an interval $I  \subset \mathbb{R}$ and an initial time $t_0 =\min I \in I$ can easily be reduced to the format (\ref{IVP}). As it is well known, this  abstract framework covers endless situations of practical interest, including both parabolic and hyperbolic problems (see, e.g., \cite{Hille,Pazy}).

A  Runge-Kutta method of order $p$,  applied to (\ref{IVP}), suffers from the so-called order reduction phenomenon (ORP): it happens that the method, applied in the context of a solution $u \in {\cal C}^{p+1}([0,+\infty),X)$, exhibits an order of convergence $0 \le \nu \le p$ which  is related to the stage order of the method $q \le p$, rather than to $p$ itself. In the context of classical PDEs,  $\nu$ is fractional, no matter how regular the solution $u$ is (in space and time).  In \cite{Ostermann,Ostermann2,IC}, optimal orders of these methods are shown and, for the convenience of the reader,  this is reviewed in Section~3 below.

Some attemps to avoid the ORP are based on the idea of correcting the RK (see, e.g., \cite{Isaias, Mpaz}), but this requires using certain derivatives of the data, something that, in general,  introduces new errors, unless such auxiliary derivatives can be obtained in closed form or can be approximated with great accuracy. Recently, a new, interesting approach has been introduced in \cite{Roberts}, where the authors add new nodes to the RK tableau, leading to the need of adding  some extra evaluations of $f$.

The present paper is based on the remark that  homogeneous problems (\ref{IVP})  (i.e., $f = 0$) can be discretized by using an A-acceptable approximation $r(z)$ to $\text{e}^z$. To this end, it is enough  to implement the {\it rational approximation}, defined by the recurrence
\begin{equation}
\label{recurrence}
u_{n+1}=r(\tau A) u_n, \qquad n \ge 0,
\end{equation}
with initial value $u_0 \in X$ and constant stepsize $\tau > 0$. In this situation there is no order reduction. A step in the recurrence needs solving $s$ linear systems involving $A$, where $s$ is the number of poles (accounted along their multiplicities) of $r(z)$. Furthermore, when $f=0$, an A-stable  RK method applied to (\ref{IVP}) becomes the rational method based on its own stability function.
Notice that in this situation the abcissa $\textbf{c}$ of the RK tableau are not required whatsover.

The main idea is just to cast a non homogeneous IVP into an enlarged, homogeneous problem which is then discretized by a rational method. Essentially, this is achieved by treating $f$ as a new unknown (see Section~4), in the line of the approach used in \cite{Fabrizio} for equations with memory. The resulting discretization is in principle theoretical, but can be implemented within the optimal order just by using auxiliary evaluations of $f$. To this end, some discrete time grid is required and it turns out that sensible choices of such  grids lead to procedures that, per step,  require (i) just a new evaluation of $f$ and (ii) solving a number $s$ of linear systems. Thus, we propose  a  procedure that avoids the ORP. When $r(z)$ is the stability function of a RK method, the new approach maintains the same number $s$ of linear systems per step, as in the RK case, but now only one new evaluation of $f$ is needed.

The paper focuses on the time discretization, though it is clear that full discretizations of PDEs can be treated with standard techniques (see, e.g.,  \cite{Isaias,Mpaz}), and it is organized as follows. In Section~2 we fix the notation and recall the basic results concerning rational approximations, as well as some facts related to interpolation spaces. For the reader's convenience, the order reduction  is explained is Section~3. The new procedure is introduced and studied in Section~4, while Section~5 is devoted to the holomorphic case. Some numerical illustrations are provided in the final Section~6.

\section{Preliminaries and notation}
For a   complex Banach space $(X, \| \cdot \|)$, $M \ge 1$ and $\omega \in \mathbb{R}$, let $\mathcal{G}\left( X, M, \omega \right)$ be the set formed by all the  infinitesimal generators $ A: D(A) \subset X \to X$ of a $\mathcal{C}_0$-semigroup $S_A(t)$, $t \ge 0$, on $X$ with growth
\begin{equation}
\label{semigroupbound}
\| S_A(t) \| \leq \, M \text{e}^{\omega t},  \qquad t\geq 0.
\end{equation}

For the rest of the section, we fix $X$, $M \ge 1$, $\omega \in \mathbb{R}$ and a generator $ A: D(A) \subset X \to X$  in $\mathcal{G}\left( X, M, \omega \right)$. The Hille-Yosida theorem (see, e.g., \cite{Pazy}) guarantees that the spectrum $\sigma(A)$  is contained in the complex half-plane $\Re (z) \leq \omega$ and there hold the inequalities
\begin{equation}
\label{HY}
\| (z I - A)^{-n} \| \le M/(\Re(z) - \omega)^n, \qquad n \ge 1, \quad \Re (z) > \omega.
\end{equation}

An A-acceptable rational mapping $r: \mathbb{C} \to \mathbb{C}$, bounded on the half-plane $\text{Re} z \le 0$, can be developed
into simple fractions
$$
r(z) = r_{\infty} + \sum_{\ell = 1}^k \sum_{j=1}^{m_{\ell}} \dfrac{r_{\ell j}}{(1-z w_{\ell})^j}, \qquad \Re (w_{\ell}) > 0, \quad 1 \le \ell \le k.
$$
Set 
\begin{equation}
\label{umbraltau}
\tau_0(r,\omega) = \left\{ \begin{array}{lcl} +\infty, &\text{if } & \omega \le 0, \\
\min_{1 \le \ell \le k} \Re (1/w_{\ell})/\omega, &\text{if } & \omega > 0, \end{array} \right. 
\end{equation}
so that, in view of (\ref{HY}), for $0 < \tau < \tau\left(r,\omega\right)$,  it makes sense to define the linear, bounded operator in $X$
\begin{equation}
\label{simfrac}
r(\tau A) = r_{\infty}I + \sum_{\ell = 1}^k \sum_{j=1}^{m_{\ell}} r_{\ell j} (I-\tau w_{\ell} A)^{-j}.
\end{equation}

Notice that an evaluation $r(\tau A) v$, $v \in X$, requires solving a total of $s:=\sum_{\ell = 1}^k m_{\ell}$ linear systems of the form
$$
(I-\tau w A) x =y, \qquad \Re(w) >0, \,  y \in X.
$$
The corresponding rational approximation to the solution of the homogeneous, linear problem
\begin{equation}
\label{IVPh}
\left\{ \begin{array}{lcl} u'(t) & = & A u(t), \quad t \ge 0, \\
u(0)  & = & u_0,
\end{array} \right.
\end{equation}
is thus provided by recurrence (\ref{recurrence}). 

Let us also assume that $r(z)$ is an approximation to $\text{e}^z$ of order $p \ge 1$, i.e.,
$$
r(z)- \text{e}^z = O(z^{p+1}), \qquad r(z)- \text{e}^z \ne O(z^{p+2}), \qquad \text{as } z \to 0.
$$   
Next we recall the main results concerning stability and convergence  \cite{BrTh,LTW}.  It  is shown (Theorems~1 and 3 in \cite{BrTh}) that there exist three constants
$$
C_e = C_e(r) >0, \qquad C_s(n) = C_s(r,n) > 0, \qquad \kappa = \kappa (r) \ge 1,
$$
(we emphasize that they do not depend on the considered semigroup whatsoever) such that, for $0 <\tau< \tau_0(r,\omega)$ (notice that for $\omega \le 0$, there is no upper restriction on $\tau$):

(a)  There holds the stability bound
\begin{equation}
\| r^n \left( \tau A \right) \| \leq  M\, C_s(n) \, \text{e}^{\omega^+ \kappa t}, \qquad t_n = n\tau, \qquad \text{with} \quad C_s(n) =O\left(\sqrt{n}\right),
\label{stabilityBT}
\end{equation}
and $\omega^+ = \max\left\lbrace 0, \omega \right\rbrace$. The weak stability (\ref{stabilityBT}) is optimal, in general (it is sharp for $A= \text{d}/\text{d} x$ in maximum-norm \cite{BrTh0}), and it can be improved depending on the behaviour of $r(z)$ (Theorem~2 in \cite{BrTh}) and on the nature of the operator $A$. For instance,  the term $C_s(n)$ becomes $O(1)$ in the following cases: (a1) for  $r(z) = 1/(1-z)$, which corresponds to the implicit Euler method, (a2) when $X$ is a Hilbert space and $A$ is an $\omega$-dissipative operator, and  (a3) when $A$ generates a holomorphic semigroup in a Banach space $X$ (\cite{Crouzeix,Pal1}).

(b) There holds the  optimal convergence estimate (Theorem~3 in \cite{BrTh})
\begin{equation}
\| r^n \left( \tau A \right) u_0 - S_A(n\tau) u_0\| \leq C_e \, M \, t_n \tau^{p'} \, \text{e}^{\omega^+ \kappa t_n} \|A^{p'+1}u_0\|, \qquad n \ge 1,
\label{convergenceBT}
\end{equation}
valid  for $u_0 \in D(A^{p'+1})$, with $1 \le p' \le p$. Observe that $C_s(r,n)$ does not appear in (\ref{convergenceBT}), so that the convergence is optimal even in cases of weak stability. However, stability affects the treatment of the non-homogeneous problems, as well as the analysis of full discretizations. Moreover, it is not hard to conclude that when  the semigroup exhibits a growth 
$$
\| S_A(t) \| \le M(1+t)\text{e}^{\omega^+ t}, \qquad t \ge 0,
$$
then we can modify $C_e$ and get
\begin{equation}
\| r^n \left( \tau A \right) u_0 - S_A(n\tau) u_0\| \leq C_e \, M (1+t_n)\, t_n\, \tau^{p'} \, \text{e}^{\omega^+ \kappa t} \|A^{p'+1}u_0\|, \qquad n \ge 1.
\label{convergenceBT2}
\end{equation}

It is worth noticing that when $|r(\infty)| < 1$ (i.e., $r(z)$ is strongly acceptable) and $A$ generates a holomorphic semigroup, there holds the so-called {\it optimal parabolic} error estimate \cite{LTW}
\begin{equation}
\| r^n \left( \tau A \right) u_0 - S_A(n\tau) u_0\| \leq C_e \, M \tau^{p'} \, \text{e}^{\omega^+ \kappa t} \|A^{p'}u_0\|, \qquad n \ge 1,
\label{convergenceBThol}
\end{equation}
for $u_0 \in D(A^{p'})$, $1 \le p' \le p$. If the semigroup grows as in (\ref{convergenceBT2}) a factor $(1+t_n)$ may be added to the latter. There also holds the {\it bad initial data} error estimate
$$
\| r^n \left( \tau A \right) u_0 - S_A(n\tau) u_0\| \leq C_e \, M n^{-p} \, \text{e}^{\omega^+ \kappa t} \|u_0\|, \qquad n \ge 1,
$$
valid for $u_0 \in X$. Let us point out that, for holomorphic semigroups, the stability and the mentioned error estimates remain valid for A($\theta$)-acceptable rational mappings, as long as the angle $\theta$ is well suited to the holomorphy  angle of the semigroup \cite{Crouzeix,Pal1}.

We finish this section by commenting on some results concerning interpolation spaces which are essential to understand the ORP.
Fix $\omega^* > \omega$ and, for $\nu \ge 0$, set $X_{\nu} = D\left(\left(\omega^* I - A \right)^{\nu}\right)$. The space $X_{\nu}$ is endowed with the graph norm $\|\cdot\|_\nu$ of $\left(\omega^* I - A \right)^{\nu}$. It is well known that $X_{\nu}$ is independent of $\omega^* > \omega$ and that changing $\omega^* > \omega$ results in an equivalent norm.

On the other hand, the real interpolation method \cite{Triebel} provides different intermediate spaces $X_{\nu, p} = [X_0, X_1]_{\nu, p}$, with norms $\| \cdot \|_{\nu,p}$, $0\leq \nu < 1$, $1 \leq p \leq \infty$. It is important to notice that (see Theorem~ 4.17 in \cite{Lunardi})
$$
X_{\nu + \epsilon, p}\hookrightarrow X_{\nu} \hookrightarrow X_{\nu - \epsilon, q} \qquad 0 \leq p,\,  q \leq \infty, \quad 0 \leq \nu - \epsilon < \nu + \epsilon \leq 1,
$$
with continuous embeddings. As a consequence, for  $0 < \nu^* < 1$, $u \in X$, we have that
\begin{equation}
\label{equivinter}
u \notin X_{\nu, p}, \quad \text{for} \quad \nu < \nu^* \leq 1 \, \Leftrightarrow \, u \notin X_{\nu}, \quad \text{for} \quad \nu < \nu^* \leq 1.
\end{equation}

We conclude this section by illustrating the previous concepts in the context of typical evolutionary PDEs in an $L^p$, $p \geq 1$, framework. Let us consider $X = L^p\left(\Omega\right)$, $p \geq 1$, where $\Omega \subset \mathbb{R}^d$ is a bounded domain with regular boundary $\Gamma$. Moreover, we are given two linear partial differential operators $P, \, Q$ on $\Omega$ of orders $m$ and $k \leq m - 1/p$, respectively, with smooth coefficients. The operator $A$ acts on 
$$
D\left(A\right) = \left\lbrace \phi \in W^{m,p}\left(\Omega\right) \,  / \,\,  Q \phi \mid_{\Gamma} = 0 \right\rbrace,
$$
and $A\phi = P\phi$, for $\phi \in D\left(A\right)$. Set $\nu^* =(k+1/p)/m$. Notice that the trace of $Q$, i.e., the operator $\partial : \phi \mapsto Q \phi \mid_{\Gamma}$, can be understood as a linear, bounded operator from $W^{\nu m, p} \left(\Omega\right)$ to $L^p\left(\Omega\right)$, whenever $\nu > \nu^*$ (see, e.g., \cite{Yagi}). The remarkable result in \cite{Lofs} states that, for the $p$-real interpolation method, there holds
\begin{equation}
X_{\nu,p} = [X_0, X_1]_{\nu,p} = \left\{ \begin{array}{lcl} W^{\nu m, p}\left(\Omega\right), &\text{if } & \nu < \nu^*, \\
W^{\nu m, p}\left(\Omega\right) \cap \text{ker} \partial, &\text{if } & \nu > \nu^*. \end{array} \right. 
\label{interpolation}
\end{equation}
This means that, when interpolating, the boundary condition does not need to be imposed when the trace operator does not make sense. Therefore, if the trace of a smooth mapping $\phi : \Omega \rightarrow \mathbb{C}$ is different from $0$ on $\Gamma$, then $\phi$ cannot belong to $X_{\nu,p}$, for $\nu > \nu^*$. Notice that this is also true for the domains of the fractional powers $X_{\nu}$ according to (\ref{equivinter}). This fact is what governs order reduction.

\section{Order reduction for Runge-Kutta  methods}
In this section we briefly review the basic results concerning the stability and convergence of RK methods applied to (\ref{IVP}). Let
\begin{equation*}
\begin{array}{c|c}
\textbf{c} & W\\
\hline\\[-10pt]
&  \textbf{b}^T
\end{array}, \quad\qquad \mathbf{b}, \mathbf{c} \in \mathbf{R}^s, W \in \mathbf{R}^{s \times s},
\end{equation*}
be the Butcher tableau of a given RK method of order $p \ge 1$. In our context, it is natural to assume that the stability function 
$$
r(z) = 1 + z \mathbf{b}^T\left(I - z W \right)^{-1} \mathbf{e}, \qquad \mathbf{e} = [1, \dots, 1]^T \in \mathbb{R}^s,
$$
is A-stable, i.e,
\begin{equation}
|r(z)| \leq 1, \qquad \Re (z) \leq 0.
\label{Astable}
\end{equation}

As we have already commented in the Introducion, for non-homogeneous problems (\ref{IVP}) the order $p$ of convergence is not achieved in general, no matter the time-regularity of its solution. Actually, let  $u \in \mathcal{C}^{p+1} \left([0, \infty), X \right)$ be the solution of (\ref{IVP}). The local error of the RK method applied to this problem is given by the expression \cite{IC,Ostermann,Ostermann2}
\begin{equation}
\label{localerrorRK}
\epsilon_n = \sum_{l = q+1}^p \tau^l r_l\left(\tau A\right) u^{(l)}(t_n) + O \left(\tau^{p+1}\right),
\end{equation}
where
\begin{equation}
r_l(z) = z \mathbf{b}^T \left(1- zW\right)^{-1} \left(\mathbf{c}^l - l \mathbf{c}^{l-1}\right), \qquad \text{for} \, q+1 \leq l \leq p.
\end{equation}
Moreover, the order conditions of the method guarantee that
\begin{equation}
r_l(z) = z^{p-q} r^*_l(z), \qquad \text{for} \, q+1 \leq l \leq p,
\end{equation}
for certain rational mappings $r^*_l(z)$, $q+1 \leq l \leq p$. 
Notice that since $r_l(z)$ and $r_l^*(z)$, $q+1 \le l \le p$, possess no poles on the half-plane $\Re (z) \le 0$, we can argue as in  (\ref{simfrac}) and see that  the operators $r_l(\tau A)$ and $r_l^*(\tau A)$, $q+1 \leq l \leq p$,  are bounded for $0 < \tau < \tau_0\left(r, \omega\right)$. 

For  $x \in X_{p-q}$ we also have (Lemma~ 4 in \cite{BrTh}), that
$$
\| r_l(\tau A) x\| \leq C_e\, M\, \tau^{p-q} \|x\|_{p-q}, \quad q+1 \leq l \leq p,
$$
and, by interpolation, we deduce that for $x \in X_\nu$, $0 \leq \nu \leq p-q$,
$$
\| r_l(\tau A) x\| \leq C_e\, M\, \tau^\nu \|x\|_\nu, \quad q+1 \leq l \leq p.
$$
Thus, in view of (\ref{localerrorRK}), for $u \in \mathcal{C}^{p+1} \left([0, \infty), X_\nu \right)$, we get
$$
\| \epsilon_n \| = O(\tau^{q+1+\nu}) \sup_{0 \le t \le n\tau} \| u^{(p+1)}(t)\|_\nu.
$$
Therefore, only under the stronger assumption $u \in \mathcal{C}^{p+1} \left([0, \infty), X^{p-q} \right)$ we reach the optimal local order $p+1$.  These ideas can be extended to the situation of variable step-sizes. Besides, for constant step-sizes and 
in case $r(\infty) \ne 1$, the clever summation-by-parts argument in \cite{Ostermann,Ostermann2} (extended in \cite{Isaias} to general semigroups) leads to the improvement
$$
\| \epsilon_n \| = O(\tau^{q+1+\mu}) \sup_{0 \le t \le n\tau} \| u^{p+1}(t)\|_\nu,
$$
where $\mu = \min(\nu+1,p-q)$.

The origin of the ORP relies in the fact that, as we mentioned in (\ref{interpolation}), all we can expect in the context of standard PDEs is that 
$u \in \mathcal{C}^{p+1} \left([0, \infty), X_{\nu^*} \right)$, for some well defined value 
$0 < \nu^*<1$. Setting $\nu = \nu^*$ (or $\nu = \min(\nu^*+1,p-q)$ when $r(\infty) \ne 1$), we easily get the error estimate
$$
\| u(t_n) - u_n \| = C_s(r,n) \tau^{q+\nu}, \qquad n \ge 1,
$$
where $C_s(r,n)$ stands for the stability bound of the recurrence. It is worth noticing that such a  fractional order of convergence is the one occurring in practical computation.

\section{The method}
Our starting remark is that, in view of (\ref{convergenceBT}), a RK method applied to a homogeneous problem results in a rational method (\ref{recurrence}) and exhibits its optimal (classical) order $p$. This suggests to cast (\ref{IVP}) into an extended homogeneous problem, which is then time discretized by a rational method.

To this end, we introduce the space $Y = \mathcal{C}_{ub} \left([0,\infty), X \right)$ of all bounded, uniformly continuous functions from $[0,\infty)$ to $X$, endowed with the supremum norm. We consider the \textit{semigroup of translations} on this space, whose properties are summarized in the following lemma, that we state without proof (see \cite{Engel} for the case $X = \mathbb{C}$).

\begin{lemma} Let $X$ be a Banach space and $Y = \mathcal{C}_{ub}\left([0,\infty), X\right)$. Then the semigroup of translations $S_B(t): Y \rightarrow Y$, $t \geq 0$, defined by
\begin{equation*}
\left[ S_B(t)\, v \right] (s) = v(t+s), \qquad v \in Y, \, s \geq 0,
\end{equation*}
is a $\mathcal{C}_0$ semigroup. Its infinitesimal generator $B: D\left(B\right) \subset Y \rightarrow Y$ belongs to $\mathcal{G}\left(Y,1,0\right)$, $D\left(B\right) = \left\lbrace v \in Y / v' \in Y \right\rbrace$ and $B v = v'$ for $v \in D\left(B\right)$.
\label{translations}
\end{lemma}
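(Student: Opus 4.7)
The plan is to verify the three assertions (semigroup, $\mathcal{C}_0$, identification of the generator) in the order stated, using that the defining feature of $Y$ is \emph{uniform} continuity. First I would check that $S_B(t)$ actually maps $Y$ into $Y$: if $v$ is bounded and uniformly continuous on $[0,\infty)$, then so is $s \mapsto v(t+s)$, with the same modulus of continuity and the same supremum bound. The algebraic semigroup identities $S_B(0)=I$ and $S_B(t_1+t_2)=S_B(t_1)S_B(t_2)$ are immediate from the pointwise definition, and the supremum norm yields $\|S_B(t)v\|_Y=\sup_{s\ge 0}\|v(t+s)\|\le \|v\|_Y$, giving $\|S_B(t)\|\le 1$ for every $t\ge 0$. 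This already pins down the constants $M=1$, $\omega=0$ for the claim $B\in\mathcal{G}(Y,1,0)$, once $S_B$ is shown to be strongly continuous and $B$ is shown to be its generator.

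Next I would establish strong continuity at $t=0^+$. For fixed $v\in Y$ and $\varepsilon>0$, uniform continuity of $v$ gives a $\delta>0$ with $\|v(s')-v(s)\|<\varepsilon$ whenever $|s'-s|<\delta$; then for $0\le t<\delta$,
\begin{equation*}
\|S_B(t)v-v\|_Y=\sup_{s\ge 0}\|v(t+s)-v(s)\|\le \varepsilon,
\end{equation*}
proving $S_B(t)v\to v$ in $Y$. Together with the contractivity bound, this yields the $\mathcal{C}_0$ property and the membership $B\in\mathcal{G}(Y,1,0)$.

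For the generator, let $D:=\{v\in Y:v'\in Y\}$. If $v\in D$, the identity $v(t+s)-v(s)=\int_0^t v'(s+\xi)\,d\xi$ gives
\begin{equation*}
\Bigl\|\frac{S_B(t)v-v}{t}-v'\Bigr\|_Y\le \sup_{s\ge 0}\frac{1}{t}\int_0^t\|v'(s+\xi)-v'(s)\|\,d\xi,
\end{equation*}
and uniform continuity of $v'\in Y$ makes the right-hand side tend to $0$ as $t\to 0^+$. Hence $v\in D(B)$ with $Bv=v'$. Conversely, if $v\in D(B)$ with $Bv=w\in Y$, convergence in $Y$ implies pointwise convergence $(v(t+s)-v(s))/t\to w(s)$ for every $s\ge 0$, so $v$ is (right) differentiable with $v'=w\in Y$; a standard argument then upgrades this to ordinary differentiability, placing $v$ in $D$. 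This proves $D(B)=D$ and completes the identification.

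The only nontrivial point is the last paragraph: passing from pointwise derivatives to uniform convergence in both directions. This is where the uniform continuity built into $Y$ is crucial, and it is what I expect to be the main obstacle a reader would want filled in; everything else is bookkeeping from the pointwise definition of $S_B$.
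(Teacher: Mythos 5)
Your argument is correct. Note that the paper itself offers no proof of this lemma: it is explicitly ``stated without proof'' with a pointer to Engel--Nagel for the scalar case, so there is no in-paper argument to compare against; your write-up is essentially the standard verification from that reference, transplanted verbatim to vector-valued functions. The contraction bound, strong continuity via the uniform modulus of continuity, and the inclusion $\{v : v'\in Y\}\subseteq D(B)$ via $v(t+s)-v(s)=\int_0^t v'(s+\xi)\,d\xi$ are all sound. The one step you rightly flag as nontrivial --- upgrading uniform convergence of the right difference quotients to genuine differentiability --- is the classical lemma that a continuous function with a continuous right derivative is $\mathcal{C}^1$, and it does close the converse inclusion $D(B)\subseteq\{v: v'\in Y\}$. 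An alternative that avoids this real-analysis lemma entirely is the resolvent route: having shown $B$ extends $d/ds$ on $D=\{v:v'\in Y\}$, observe that $(\lambda I-d/ds):D\to Y$ is already surjective for $\lambda>0$ (via the explicit formula $\int_0^\infty e^{-\lambda\sigma}v(\cdot+\sigma)\,d\sigma$, which the paper uses later anyway in (\ref{resolventBintegral})), while $(\lambda I-B)$ is injective on $D(B)$; this forces $D=D(B)$. Either way the lemma holds; your proof is complete as written.
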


We consider $X$ and $A : D(A) \subset X \rightarrow X$ as in (\ref{IVP}), $Y = \mathcal{C}_{ub} \left([0,\infty), X \right)$ and $B$ as in the previous lemma. We define the bounded, linear operator $L : Y \rightarrow X $ by $L y = y(0)$, for $y \in Y$, the Banach product space $Z = X \times Y $ with the norm $\|(x,y)\|_Z = \|x\|_X + \|y\|_Y$ and the operator
\begin{equation}
G\begin{pmatrix}
u \\ 
v
\end{pmatrix} = 
\begin{pmatrix}
A & L \\ 
0 & B
\end{pmatrix} 
\begin{pmatrix}
u \\ 
v
\end{pmatrix},
\quad \text{for} \,\, (u, v)^T \in D\left(G \right):=D\left(A \right) \times D\left(B \right).
\label{HomGenerator}
\end{equation}
A direct comprobation shows that $G$ is the infinitesimal generator of a semigroup in the space $Z$, with growth
$$
\| S_G(t)\| \le M(1+t)\text{e}^{\omega^+ t}, \qquad t \ge 0.
$$

Notice that for $u_0 \in D\left(A\right)$, $f \in D\left(B\right)$, the solution of the IVP
\begin{equation}
\label{IVPe}
\left\{ \begin{array}{lcl} U'(t) & = & G \, U(t), \quad t \geq 0, \\
U(t_0)  & = & (u_0,f)^T,
\end{array} \right.
\end{equation}
is given by $U(t) = (u(t),v(t))^T$, $t \geq 0$, where
\begin{equation*}
u'(t) = A\, u(t) + L \, v(t), \qquad v'(t) = B\, v(t).
\end{equation*}
Taking into account the definition of $S_B$ and $L$, 
\begin{equation*}
v(t) = f(t+\cdot), \qquad L\, v(t) = f(t),
\end{equation*}
we obtain
\begin{equation}
\label{IVP2}
\left\{ \begin{array}{lcl} u'(t) & = & A u(t) + f(t), \quad t \geq 0, \\
u(t_0)  & = & u_0. 
\end{array} \right.
\end{equation}
By continuity, we conclude that even the generalised solutions of (\ref{IVP2}) are provided by the first component of the solutions of (\ref{IVPe}). Thus, it is natural to approximate (\ref{IVP}) by means of a rational method applied to (\ref{IVPe}). In this way it is clear by (\ref{convergenceBT}) that no order reduction occurs. This leads to the recurrence
\begin{equation}
\bar{U}_{n+1} = r\left(\tau G\right) \begin{pmatrix}
\bar{u}_n \\ 
\bar{v}_n
\end{pmatrix}  =r_\infty \begin{pmatrix}
\bar{u}_n \\ 
\bar{v}_n
\end{pmatrix} + \sum_{\ell = 1}^k\sum_{j=1}^{m_{\ell}} r_{\ell  j} \left(I - \tau w_{\ell} G\right)^{-j} \begin{pmatrix}
\bar{u}_n \\ 
\bar{v}_n
\end{pmatrix}.
\label{metodoexacto}
\end{equation}
Thus, assuming that $u, f \in \mathcal{C}^{p+1}\left([0,\infty),X\right)$, it is clear that $U\in \mathcal{C}^{p+1}\left([0,\infty),Z\right)$, and since (\ref{IVPe}) is a homogeneous problem, $U \in \mathcal{C}\left([0,\infty), D\left(G^{p+1}\right)\right)$. Notice that $D\left(G^{p+1}\right)$ may be different from $D\left(A^{p+1}\right) \times D\left(B^{p+1}\right)$, so $u$ may not lie in $D\left(A^{p+1}\right)$. Under this assumption, the convergence result (\ref{convergenceBT2}) applied to $G$ and initial data $u_0$ and $v_0 = f$ guarantees that
\begin{equation}
\|u(t_n) - \bar{u}_{n} \| \leq C_eM(1+ t_n)\,t_n \,  \tau^p \text{e}^{\omega^+ \kappa t_n}  \left(\|u^{(p+1)}\|_{\infty} + \|f^{(p+1)}\|_{\infty}\right), \quad n \geq 1,
\label{exactconvergence}
\end{equation}
whereas the same result with generator $B$ and initial data $v_0$ leads to
\begin{equation}
\|f(t_n + \cdot) - \bar{v}_{n} \|_{\infty} \leq C_e t_n  \tau^p    \|f^{(p+1)}\|_{\infty}, \quad n \geq 1.
\label{translationconvergence}
\end{equation}
The practical difficulty of (\ref{metodoexacto}) lies in the fact that it is not possible to implement exactly the rational method by using evaluations of $f$ on a discrete mesh. The method we propose provides approximations $u_n$ to $\bar{u}_n$ that avoid this drawback.
 
We start by computing the resolvent of $G$ (and its powers) in terms of the resolvents of $A$ and $B$.
\begin{lemma} The k-th powers of the resolvent of the operator $G$ in (\ref{HomGenerator}) can be expressed by computing $k$ separate resolvents of the operators $A$ and $B$. In fact, if $(u_0,v_0)^T \in X\times Y$, we can obtain $U_k = (u_k, v_k)^T= (\lambda I-G)^{-k} (u_0, v_0)^T$ via the following recurrence
\begin{equation}
v_j = \left( \lambda I - B \right)^{-1} v_{j-1}, \qquad u_j = \left(\lambda I - A \right)^{-1} \left(u_{j-1} + L v_j\right), \quad for \, j=1, \dots, k.
\end{equation}
\label{resolventG}
\end{lemma}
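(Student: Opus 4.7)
The plan is to handle the first power explicitly and then to iterate. Since $G$ is block upper-triangular, I would write the equation
$(\lambda I - G)\begin{pmatrix} x \\ y \end{pmatrix} = \begin{pmatrix} u_0 \\ v_0\end{pmatrix}$
as the coupled system $(\lambda I - A)x - L y = u_0$, $(\lambda I - B) y = v_0$. The second equation gives $y = (\lambda I - B)^{-1} v_0$ directly, and substituting into the first yields $x = (\lambda I - A)^{-1}(u_0 + Ly)$. This is precisely the proposed recurrence for $j=1$, and incidentally shows that
\[
(\lambda I - G)^{-1} = \begin{pmatrix} (\lambda I - A)^{-1} & (\lambda I - A)^{-1} L (\lambda I - B)^{-1} \\ 0 & (\lambda I - B)^{-1} \end{pmatrix},
\]
so the block upper-triangular structure is preserved by inversion.

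For the general case $k \ge 1$, I would argue by induction, using the semigroup property of the resolvent powers $(\lambda I - G)^{-k} = (\lambda I - G)^{-1}(\lambda I - G)^{-(k-1)}$. If the claim holds up to $k-1$, so that $U_{k-1} = (u_{k-1}, v_{k-1})^T$ is produced by the recurrence, then applying $(\lambda I - G)^{-1}$ to $U_{k-1}$ is, by the step already analysed, exactly one more turn of the loop, namely $v_k = (\lambda I - B)^{-1} v_{k-1}$ and $u_k = (\lambda I - A)^{-1}(u_{k-1} + L v_k)$. Hence $U_k = (u_k,v_k)^T = (\lambda I - G)^{-k}(u_0, v_0)^T$, closing the induction.

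The only point that needs a word of care is existence: the formula requires $\lambda$ to lie simultaneously in the resolvent sets of $A$ and $B$. By Hille–Yosida applied to $A \in \mathcal{G}(X,M,\omega)$ and $B \in \mathcal{G}(Y,1,0)$ this is guaranteed for $\Re(\lambda) > \omega^+$, which is the range in which $(\lambda I - G)^{-1}$ is defined through the growth bound $\|S_G(t)\| \le M(1+t)\mathrm{e}^{\omega^+ t}$ recorded just before the lemma; in particular, for the shifted resolvents arising from (\ref{simfrac}) with $\lambda = 1/(\tau w_l)$ and $0 < \tau < \tau_0(r,\omega)$, all three operators are simultaneously invertible, so the recurrence is well posed. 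I do not expect any real obstacle here: everything reduces to the block upper-triangular inversion identity, which in turn rests on the fact that $L$ is bounded on all of $Y$ so no domain issues arise when composing $(\lambda I - A)^{-1}L(\lambda I - B)^{-1}$.
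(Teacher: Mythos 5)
Your proposal is correct and follows essentially the same route as the paper: both reduce $(\lambda I-G)^{-k}$ to repeated application of $(\lambda I-G)^{-1}$ and solve the resulting block upper-triangular system backwards, first for $v_k$ via $(\lambda I-B)^{-1}$ and then for $u_k$ via $(\lambda I-A)^{-1}$. Your added remarks on the explicit block form of the inverse and on the common resolvent set are harmless elaborations of the same argument.
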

\begin{proof}
The identity $U_k = (\lambda I-G)^{-1}U_{k-1}$ leads to the equation
\begin{equation*}
\begin{pmatrix}
\lambda I - A & - L \\ 
0 &\lambda  I - B
\end{pmatrix} 
\begin{pmatrix}
u_k \\ 
v_k
\end{pmatrix}
= 
\begin{pmatrix}
u_{k-1} \\ 
v_{k-1}
\end{pmatrix},
\end{equation*}
that can be immediately expressed as
\begin{equation*}
v_k = (\lambda I-B)^{-1} v_{k-1} = (\lambda I-B)^{-k} v_{0},
\end{equation*}
and
\begin{equation}
u_k = (\lambda I-A)^{-1} \left(u_{k-1} + L v_k\right) = (\lambda I-A)^{-1} \left(u_{k-1} + L (\lambda I-B)^{-k} v_{0}\right).
\label{discreterecurrence}
\end{equation}\end{proof}

By using (\ref{discreterecurrence}) with $\lambda = 1/\tau w_{\ell}$ and the discrete variation-of-constants formula, the two components of (\ref{metodoexacto}) can be written as
\begin{equation}
\quad \bar{u}_{n+1} = r\left(\tau A \right) \bar{u}_n + \tau E(\tau)\bar{v}_n, \qquad \bar{v}_{n+1} = r\left(\tau B \right) \bar{v}_n, \quad n \geq 1,
\label{metodoexactoshort}
\end{equation}
where $E(\tau): Y \rightarrow X$ is the linear operator given by
\begin{equation}
E(\tau)v = \sum_{\ell = 1}^k \sum_{j=1}^{m_{\ell}} r_{\ell, j} w_{\ell} \sum_{i=1}^j \left(I - \tau w_{\ell} A \right)^{-j+i-1} L \left(I - \tau w_{\ell} B\right)^{-i} v,
\label{operatorE}
\end{equation}
for $v\in Y$, which is bounded for $0 < \tau < \tau_0$.

As we mentioned, even though the semigroup $S_B(t)$ is trivial, the resolvents of $B$ cannot be computed in a direct way by using evaluations of $f$ along a discrete mesh. However, for our purpose, it will be sufficient to approximate the resolvent with a suitable order. Recalling that 
\begin{equation*}
\left(\lambda I - B \right)^{-1}   = \int_0^{\infty} \text{e}^{-\lambda s}  S_B(s) \, ds ,
\end{equation*}
we see that
\begin{equation}
\left[\left(\lambda I - B \right)^{-1} v\right](t) = \int_0^{\infty}  \text{e}^{-\lambda s} v\left(t + s\right)\, ds,
\label{resolventBintegral}
\end{equation}
for $t\geq 0$ and $v \in Y$. Though in principle it makes sense to approximate (\ref{resolventBintegral}) by some adequate quadrature formula, our approach is based on Lemma~\ref{lemaVandermonde} below that also allows us to approximate $F(\tau B)$ for more general functions $F$, in particular, for the powers of the resolvent of $B$.  

In what follows, for $v \in Y$, $\boldsymbol{c} \in \mathbb{R}^p$ and $t, \, \tau >0$ such that  $t+\tau \boldsymbol{c} \ge 0$, $v(t + \tau\boldsymbol{c})$ denotes $\left[v(t + \tau c_1), \dots, v(t + \tau c_p)\right]^T$. 

\begin{lemma} Let $F$ be a rational mapping with no poles on the half-plane $\Re (z) \leq 0$ and $\boldsymbol{c} \in \mathbb{R}^p$ with  $c_k \neq c_j$ for $k \neq l$. Then, there exist a unique $\boldsymbol{\gamma} =\{ \gamma_k\}_{k=1}^p \in \mathbb{R}^p$ and $ C > 0$ such that for  $t, \, \tau >0$,  $t+\tau \boldsymbol{c} \ge 0$,
\begin{equation}
\| F\left(\tau B\right)v(t+\cdot) - \boldsymbol{\gamma}^T \cdot v(t+\tau \boldsymbol{c}+\cdot)\| \leq C \tau^p \|B^p v\|,
\qquad v \in D(B^p)
\label{resolventB}
\end{equation}

\label{lemaVandermonde}
\end{lemma}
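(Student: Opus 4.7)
The plan is to expand both sides around $0$ to order $p-1$, match the expansions by solving a Vandermonde system, and control the two remainders separately.

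First, I would expand the rational map $F$. Since $F$ has no poles on $\Re(z) \le 0$, in particular none at $0$, I can write $F(z) = \sum_{j=0}^{p-1} a_j z^j + z^p R(z)$, where $R$ is a rational function still bounded on $\Re(z) \le 0$. Applying this to $\tau B$ (using that Lemma~\ref{translations} gives $B \in \mathcal{G}(Y,1,0)$, so $R(\tau B)$ is bounded uniformly in $\tau > 0$ by the stability bound (\ref{stabilityBT}) applied to $R$), I get
$$
\Bigl\| F(\tau B) v - \sum_{j=0}^{p-1} a_j \tau^j B^j v \Bigr\| = \tau^p \|B^p R(\tau B) v\| \le C_1 \tau^p \|B^p v\|,
$$
since $B^p$ and $R(\tau B)$ commute on $D(B^p)$. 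Recall that $B^j v = v^{(j)}$ for $v \in D(B^p)$.

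Next I would Taylor-expand the evaluation functionals. For $v \in D(B^p)$, $v^{(p)} \in Y$, and Taylor's formula with integral remainder gives
$$
v(t + \tau c_i) = \sum_{j=0}^{p-1} \frac{(\tau c_i)^j}{j!}\, v^{(j)}(t) + \frac{1}{(p-1)!}\int_0^{\tau c_i} (\tau c_i - s)^{p-1} v^{(p)}(t+s)\, ds,
$$
whose remainder is bounded in $Y$-norm by $(\tau |c_i|)^p / p! \cdot \|B^p v\|$. Summing against weights $\gamma_i$ and matching against the polynomial part obtained in the previous step, I need
$$
\sum_{i=1}^{p} \gamma_i \frac{c_i^j}{j!} = a_j, \qquad j = 0, 1, \dots, p-1.
$$
This is a linear system whose matrix is (up to the diagonal rescaling by $j!$) the transpose of the Vandermonde matrix $V = (c_i^{j-1})_{i,j}$. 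Since the $c_i$ are pairwise distinct, $V$ is invertible, which determines $\boldsymbol{\gamma} \in \mathbb{R}^p$ uniquely.

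Finally, I would combine the two estimates by the triangle inequality. The polynomial parts cancel by construction, leaving
$$
\bigl\| F(\tau B) v - \boldsymbol{\gamma}^T\cdot v(\cdot + \tau \boldsymbol{c}) \bigr\| \le C_1 \tau^p \|B^p v\| + \sum_{i=1}^{p} |\gamma_i|\, \frac{(\tau |c_i|)^p}{p!}\, \|B^p v\| \le C \tau^p \|B^p v\|,
$$
with $C$ depending only on $F$, $p$ and $\boldsymbol{c}$. I do not anticipate a real obstacle here; the only delicate point is to justify that the stability constant for $R(\tau B)$ is uniform in $\tau$, which is immediate from $B$ generating a contraction semigroup and the A-acceptability of $R$ on $\Re(z) \le 0$, together with (\ref{stabilityBT}).
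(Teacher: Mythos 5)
Your proof is correct and rests on the same key idea as the paper's: determining $\boldsymbol{\gamma}$ by matching the first $p$ Taylor coefficients of $F(z)$ against those of $\sum_i\gamma_i e^{c_i z}$, which is exactly the Vandermonde system the paper solves. Where you diverge is in the remainder estimate. The paper treats the whole defect $H(z)=F(z)-\sum_k\gamma_k e^{c_k z}$ as a single bounded analytic function on $\Re(z)\le 0$, factors it as $z^pH_0(z)$, and invokes Lemmas~4 and~6 of \cite{BrTh} to transfer the bound to $H(\tau B)$; this requires an operational calculus for the non-rational function $H_0$. You instead split the error into a rational remainder $z^pR(z)$ of $F$ (handled by the partial-fraction calculus of (\ref{simfrac}) applied to $R$, whose uniform boundedness in $\tau$ follows from $B\in\mathcal{G}(Y,1,0)$ and (\ref{HY}) rather than from (\ref{stabilityBT}), which concerns powers $r^n$) plus an explicit Taylor remainder of $v$ with integral form; this is more elementary and avoids the functional calculus for non-rational symbols. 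Two points you should make explicit: (i) the identity $F(\tau B)v=\sum_{j<p}a_j\tau^jB^jv+\tau^pB^pR(\tau B)v$ on $D(B^p)$ is the operator transfer of a scalar identity and needs justification (it is essentially Lemma~4 of \cite{BrTh}, or can be checked directly from resolvent identities, together with the commutation of $R(\tau B)$ with $B^p$ on $D(B^p)$); (ii) for negative components $c_i$ the translate $v(\cdot+\tau c_i)$ leaves $[0,\infty)$, an issue your argument shares with the paper's own proof (which writes $S_B(\tau c_k)$ for possibly negative $\tau c_k$) and which is resolved only in the application by the choice of nodes in (\ref{dotgrid}). Neither point undermines the argument.
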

\begin{proof}
Le us consider the Taylor expansions 
\begin{eqnarray*}
F(z) & = & f_0 + f_1 z + \cdots + f_{p-1}z^{p-1} + O\left(z^{p}\right), \\
\text{e}^{c_k z} & = & 1 + c_k z + \cdots +  \dfrac{c_k^{p-1}}{(p-1)!} z^{p-1}+ O\left(z^{p}\right), \quad 1 \leq k \leq p,
\end{eqnarray*}
and try to find $\boldsymbol{\gamma} =\{ \gamma_k\}_{k=1}^p$ such that
$$
H(z) = F(z) - \sum_{k=1}^p \gamma_k \text{e}^{c_k z} = O\left(z^{p}\right).
$$
This leads to the  Vandermonde system
\begin{equation}
c_1^j \gamma_1 + \cdots + c_p^j \gamma_p =j! \,F_j, \qquad 0 \leq j \leq p-1,
\label{Vandermonde}
\end{equation}
which, since $c_k \neq c_l$, for $k \neq l$, has a unique solution. 

Set $P_{p-1}(z) = \sum_{k=0}^{p=1} f_k z^k$.
For $v \in D(B^p)$, the above calculation implies that, for  $t>0$, $\tau >0$, with $t+\tau c_k \ge 0$, $1 \le k \le p$,
$$
\sum_{k=0}^{p-1} f_k\tau^k v^{(j)}(t) = \sum_{k=0}^{p-1} \gamma_k \tau^k v(t+\tau c_k)+\tau^p O(B^pv),
$$ 
where the involved constant in $O(B^pv)$ is independent of $\tau$. Since this is also valid for $t+s$, with $s \ge 0$, we can claim that there exists $C_1 >0$ (independent of $t$ and $\tau$ as indicated) such that
\begin{equation}
\label{cotaP}
\| P_{p-1}(\tau B) S_B(t)v- \boldsymbol{\gamma}^T \cdot v(t+\tau c_k+\cdot) \| \le C_1\tau^p \| B^p v \|, .
\end{equation}
Let us now consider the rational mapping $F_0(z) = (F(z) - P_{p-1}(z))/z^p$, that is bounded on the half plane $\Re(z) \le 0$. Given  that  $(F-F(\infty)), \, F', \,F_0$ and $F_0' $ are square integrable along the imaginary axis, Lemma~1 and Lemma~2 in \cite{BrTh} imply that   $F(z)$ and $F_0(z)$ are Laplace-Stieljes transforms of originals measures of bounded variation, with support contained in $[0,+\infty)$.  Moreover, by Lemma~4 in \cite{BrTh}, we have
$$
F(\tau B)v- P_{p-1}(\tau B)w = \tau^p F_0(\tau B)  B^pw, \qquad w \in D(B^p),
$$
and, by Lemma~5 in \cite{BrTh},  the norms $\| F_0(\tau B) \|$ are uniformly bounded, for $\tau >0$, by the total variation of the original measure of $F_0$. Therefore, there exists $C_2>0$ such that
$$
\| F(\tau B)w -P_{p-1}(\tau B) w\| \le C_2\tau^p \| B^p w\|, \qquad w \in D(B^p),
$$
an estimate that,  applied to  $w=S_B(t)v$, leads to
\begin{equation}
\label{cotaF}
\| F(\tau B)S_B(t)v -P_{p-1}(\tau B) S_B(t)v\| \le C_2\tau^p  \| B^p v\|.
\end{equation}
To conclude the proof, we just write
$$
F(\tau B)S_B(t)v-\boldsymbol{\gamma}^T \cdot v(t+\tau \boldsymbol{c})=(I) + (II),
$$
where
\begin{eqnarray*}
(I) &=& F(\tau B)S_B(t)v - P_{p-1}(\tau B) S_B(t) v, \\
(II) &= &P_{p-1}(\tau B) S_B(t) v - \boldsymbol{\gamma}^T \cdot v(t+\tau \boldsymbol{c}),
\end{eqnarray*}
and recall (\ref{cotaF}) and (\ref{cotaP}).
\end{proof}

Notice that the proof also shows a way to find the adequate $\boldsymbol{\gamma}$ by solving the Vandermonde system (\ref{Vandermonde}) and that $\boldsymbol{\gamma}$ depends only on $F$ and $\boldsymbol{c}$, but not on $v, \,\tau,\,t$.

We finally propose the following method. Set $\mathcal{D}= \left\lbrace \boldsymbol{c} \in \mathbb{R}^p / c_i \neq c_j, i \neq j \right\rbrace$ and choose a sequence $\boldsymbol{c_n}$, $n\geq 1$, in $\mathcal{D}$. Lemma \ref{lemaVandermonde} applied to $F_{\ell,j}(z) = (1-w_{\ell} z)^{-j}$, $1 \leq \ell \leq k$, $1\leq i \leq m_{\ell}$, and a vector $\boldsymbol{c_n}$, provides $\boldsymbol{\gamma_{\ell,i}^n} \in \mathbb{R}^p$ that leads to an approximation of order $p$ (in the sense of (\ref{resolventB}))
\begin{equation}
L \left(I - \tau w_{\ell} B \right)^{-i} v \approx \boldsymbol{\gamma_{\ell,j}^n}^T \cdot v\left(\tau \boldsymbol{c_n}\right).
\label{resolventBaprox}
\end{equation}
We then adopt
\begin{equation}
u_{n+1} = r\left(\tau A\right) u_n + \tau E_n(\tau)f\left(t_n + \tau \boldsymbol{c_n}\right),
\label{metodoaproxshort}
\end{equation}
where $E_n(\tau) : X^p \rightarrow X$, $0 < \tau < \tau_0$, is the bounded, linear operator given by
\begin{equation}
E_n(\tau)f\left(t_n + \tau \boldsymbol{c_n}\right) = \sum_{\ell = 1}^k \sum_{j=1}^{m_{\ell}} r_{\ell, j} w_{\ell} \sum_{i=1}^j \left(I - \tau w_{\ell} A \right)^{-j+i-1} \boldsymbol{\gamma^n_{\ell,i}}^T \cdot f\left(t_n + \tau \boldsymbol{c_n}\right).
\label{operatorEn}
\end{equation}
Recalling (\ref{operatorE}) and in view of (\ref{resolventBaprox}) there holds
\begin{equation}
\|E_n(\tau)f(t_n + \tau \boldsymbol{c_n}) - E(\tau)f(t_n + \cdot)\| \leq K_n \tau^p \|f^{(p)}\|_{\infty},
\label{operatorEaprox}
\end{equation}
for some $K_n = K_n(\boldsymbol{c_n}) > 0$. One step in (\ref{metodoaproxshort}) requires solving $s$ resolvents of $A$, as in the homogeneous case. Moreover, for arbitrary $\boldsymbol{c_n}$, it also requires $p$ evaluations of the function $f$. However, the vectors $\boldsymbol{c_n}$ can be chosen in such a way that only one evaluation per step is done for $n \geq 2$ (see Section 6 for details). To prove convergence we also require that all the $\boldsymbol{c_n}$ lie in a compact set $\mathcal{K} \subset D$, in such a way that $K_n(\boldsymbol{c_n}) \leq K$ when $n\geq 1$.

Then we state the main result of the paper, that assures that the method (\ref{metodoaproxshort}) converges to the solution of (\ref{IVP}) without order reduction. Anyway,  the optimal order $p$ could be reduced in case of weak stability (\ref{stabilityBT}).

\begin{theorem}
Let $u : [0, \infty) \rightarrow X$ be the solution of (\ref{IVP}) to be approximated in the interval $[0, T]$ with constant step-size $0 < \tau = T/N < \tau_0$. Assume that $u \in \mathcal{C}^{p+1} \left([0,\infty), X\right)$, $f \in \mathcal{C}^{p+1} \left([0,\infty), X\right)$. Let $u_n$ be the numerical approximation to $u(t_n)$ obtained by the modified rational method (\ref{metodoaproxshort}) with nodes $\boldsymbol{c_n}$ in some compact set $\mathcal{K} \subset \mathcal{D}$. Then, there exists a constant $K =K(\mathcal{K}) > 0$ such that, for $0 \leq n \leq N$,
$$
\|u(t_n) - u_n\|  \leq  K C_e C_s(n) (1+t_n)t_n M\text{e}^{\omega^+\kappa t_n}  \tau^{p} \left(\|u^{(p+1)}\|_{\infty} + \|f^{(p)}\|_{\infty} + \|f^{(p+1)}\|_{\infty}\right).
$$\label{mainth}
\end{theorem}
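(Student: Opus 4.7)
The plan is to decompose the global error via the triangle inequality as
\[
\|u(t_n) - u_n\| \le \|u(t_n) - \bar u_n\| + \|\bar u_n - u_n\|,
\]
where $\bar u_n$ is the ``theoretical'' iterate produced by (\ref{metodoexactoshort}), obtained by applying the rational method to the enlarged homogeneous problem (\ref{IVPe}) with exact resolvents of $B$. The first term is already controlled: by (\ref{exactconvergence}) applied to $G$ with initial data $(u_0, f)^T \in D(G^{p+1})$ (which is ensured by the $\mathcal{C}^{p+1}$ regularity of $u$ and $f$), we obtain exactly a $\tau^p$ bound with the desired growth factor $(1+t_n)t_n$, matching one of the two pieces of the target estimate.

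For the second term, the plan is to subtract (\ref{metodoaproxshort}) from the first component of (\ref{metodoexactoshort}) to derive the error recurrence
\[
\bar u_{n+1} - u_{n+1} = r(\tau A)(\bar u_n - u_n) + \tau \rho_n,
\]
with local defect $\rho_n = E(\tau)\bar v_n - E_n(\tau) f(t_n + \tau \boldsymbol{c_n})$. I would then split $\rho_n$ into two additive contributions: (i) $E(\tau)\bigl(\bar v_n - f(t_n+\cdot)\bigr)$, bounded using (\ref{translationconvergence}) together with the uniform boundedness of $E(\tau) : Y \to X$ on the range $0 < \tau < \tau_0$, which follows from the formula (\ref{operatorE}) and the Hille--Yosida estimates (\ref{HY}) applied to both $A$ and $B$; and (ii) $E(\tau)f(t_n+\cdot) - E_n(\tau) f(t_n + \tau \boldsymbol{c_n})$, whose norm is bounded term-by-term through Lemma~\ref{lemaVandermonde} and the quadrature estimate (\ref{operatorEaprox}), uniformly in $n$ thanks to the hypothesis $\boldsymbol{c_n} \in \mathcal K$.

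Combining these gives $\|\rho_n\| \le C \tau^{p}(1+t_n)\bigl(\|f^{(p)}\|_\infty + \|f^{(p+1)}\|_\infty\bigr)$. Unfolding the error recurrence yields
\[
\bar u_n - u_n = \tau \sum_{k=0}^{n-1} r(\tau A)^{\,n-1-k}\rho_k,
\]
and invoking the stability bound (\ref{stabilityBT}) gives $\|r(\tau A)^{n-1-k}\| \le M C_s(n)\,e^{\omega^+\kappa t_n}$ for all $0 \le k < n$. Summing contributes an extra factor $\tau \sum_{k=0}^{n-1}(1+t_k) = O(t_n(1+t_n))$, which combined with the pre-existing $\tau^p$ yields exactly the $(1+t_n)t_n\tau^p$ shape of the theorem and accounts for the $C_s(n)$ factor that the first piece does not carry.

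The main obstacle I anticipate is the uniform-in-$\tau$ boundedness of $E(\tau)$ as an operator from $Y$ to $X$: each summand in (\ref{operatorE}) involves a product of a resolvent power of $A$, the trace operator $L$, and a resolvent power of $B$, so one has to check that the bound $\|(I - \tau w_l A)^{-m}\| \le M/(\Re(1/w_l) - \tau\omega)^m$ from (\ref{HY}) remains controlled for $0 < \tau < \tau_0(r,\omega)$, and analogously for $B$ (where $\omega = 0$, so it is immediate). The rest, including the discrete variation-of-constants argument and the final telescoping, is routine once these bounds are in place.
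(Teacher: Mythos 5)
Your proposal is correct and follows essentially the same route as the paper: the same splitting into the exact-rational-method error (bounded by (\ref{exactconvergence})), the quadrature defect (bounded by (\ref{operatorEaprox}) uniformly over $\boldsymbol{c_n} \in \mathcal{K}$), and the translation-semigroup propagation error (bounded by (\ref{translationconvergence}) and the boundedness of $E(\tau)$), all combined through the discrete variation-of-constants formula and the stability bound (\ref{stabilityBT}). Your bookkeeping of the $t_n$ and $t_n(1+t_n)$ factors matches the paper's terms (II) and (III), so no gap remains.
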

\begin{proof} First, notice that (\ref{metodoexactoshort}) can be written as
\begin{equation*}
\bar{u}_{n+1} = r\left(\tau A\right)\bar{u}_{n} + \tau E(\tau)\, r^{n}\left(\tau B \right) f,
\end{equation*}
and subtracting this expression from (\ref{metodoaproxshort}),
\begin{eqnarray*}
u_{n+1} - \bar{u}_{n+1} & = & r\left(\tau A\right) \left(u_n - \bar{u}_n\right) + \tau \left(E_n(\tau)f(t_n+ \tau \boldsymbol{c_n}) - E(\tau) r^{n}\left(\tau B\right)f\right) \\
& = & r\left(\tau A\right) \left(u_n - \bar{u}_n\right) + \tau \left(E_n(\tau)f(t_n+ \tau \boldsymbol{c_n}) - E(\tau)f(t_n + \cdot)\right)\\
&  & + \,\,\, \tau \left(E(\tau) \left(f(t_n + \cdot) - r^{n}\left(\tau B\right)f \right)\right),
\end{eqnarray*}
with $u_0 = \bar{u}_0$. Then, by the variation-of-constants formula, the error can be bounded by three terms
\begin{equation}
\|u(t_n) - u_n \| \leq (I) + (II) + (III),
\label{localerror}
\end{equation}
where
\begin{eqnarray*}
(I) & = &   \|u(t_n) - \bar{u}_n\|, \\
(II) & = & \tau \sum_{k=0}^{n-1} \| r^{n-k}\left(\tau A\right)\| \| E_k(\tau)f(t_k + \tau \boldsymbol{c_k}) - E(\tau)f(t_k + \cdot)\|, \\
(III) & = & \tau \sum_{k=0}^{n-1} \| r^{n-k}\left(\tau A\right) \| \| E(\tau) \left(f(t_k + \cdot) - r^{k}\left(\tau B\right)f \right)\|.
\end{eqnarray*}
To bound these terms, we proceed as follows. A bound for (I) is given by (\ref{exactconvergence}). Moreover, taking into account the compactness of $\mathcal{K}$, (\ref{stabilityBT}) and (\ref{operatorEaprox}) we get
\begin{equation*}
(II) \leq K\, M \, C_s(n)\, t_n \, \text{e}^{\omega^+\kappa t_n}\, \tau^p \|f^{(p)}\|_{\infty}, \qquad K=K(\mathcal{K}).
\end{equation*}
Finally, the fact that $E(\tau)$ is bounded together with (\ref{stabilityBT}) and (\ref{translationconvergence}) assures that
\begin{equation*}
(III) \leq M \, C_e \, C_s(n)\,t_n^2\,\text{e}^{\omega^+\kappa t_n}\, \tau^p \|f^{(p+1)}\|_{\infty},
\end{equation*}
and the proof concludes combining the three estimates.
\end{proof}

\section{The holomorphic case}
When both the semigroup $S_A(t)$ and the source term $f(t)$ admit holomorphic continuations to some sector $$\Sigma_{\theta} = \left\lbrace z \in \mathbb{C} \, / \, |\text{arg}(z)| \leq \theta \right\rbrace, \qquad 0 < \theta < \pi/2, $$ it is possible to reformulate Theorem 4.4 in the line of (\ref{convergenceBThol}). Moreover, as we mentioned, the stability constant $C_s(n)$ can be dropped so the order $p$ is actually recovered  and further results concerning bad initial values and a variable step-size version of the method can be considered.

We introduce the space $Y_{\theta} = \mathcal{A}_{ub} \left(\Sigma_{\theta}, X \right)$ of all bounded, uniformly continuous, analytic functions from $\Sigma_{\theta}$ to $X$ endowed with the supremum norm, that result to be a Banach space. There we define the holomorphic semigroup of translations $S_{B_{\theta}} : Y_{\theta} \rightarrow Y_{\theta}$, $ z \in \Sigma_{\theta}$, given by
$$
\left[ S_{B_{\theta}}(z)\, v \right](t) = v(z + t), \quad v \in Y_{\theta}, t \in \Sigma_{\theta}.
$$
Defining $D\left(B_{\theta}\right) = \left\lbrace v \in Y_{\theta} / v' \in Y_{\theta} \right\rbrace$, the corresponding generator is $B_{\theta}: D\left(B_{\theta}\right) \subset Y_{\theta} \rightarrow Y_{\theta}$  with $B_{\theta}v = v'$ for $v \in D\left(B_{\theta}\right)$. Then, $G_{\theta} = X \times Y_{\theta}$ becomes holomorphic too (see, e.g., section 2.3 in \cite{Yagi}) and we can state the holomorphic version of Theorem \ref{mainth}.

\begin{theorem}
Let $u : [0, \infty) \rightarrow X$ be the solution of (\ref{IVP}) to be approximated on the interval $[0, T]$ with constant step-size $0 < \tau = T/N < \tau_0$. Assume that $A$ generates a holomorphic semigroup with holomorphy angle $0 < \theta < \pi/2$ and that $r$ is a strongly $A(\theta)$-acceptable mapping. Assume also that $u \in \mathcal{C}^{p} \left([0,T], X\right)$, $f \in \mathcal{A}_{ub} \left(\Sigma_{\theta}, X\right)$. Let $u_n$ be the numerical approximation to $u(t_n)$ obtained by the modified rational method (\ref{metodoaproxshort}) with nodes $\boldsymbol{c_n}$ in some compact set $\mathcal{K} \subset \mathcal{D}$. Then, there exists a constant $K =K(\mathcal{K})> 0$ such that
$$
\|u(t_n) - u_n\| \leq K \, M\, C_e \left(1 + t_n \right)\, \text{e}^{\omega^+\kappa t_n} \, \tau^{p} \left(\|u^{(p)}\|_{\infty} + \|f^{(p)}\|_{\infty}\right), \quad 0 \leq n \leq N.
$$
\label{holmainth}
\end{theorem}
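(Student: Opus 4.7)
The plan is to adapt the three-term error decomposition of the proof of Theorem~\ref{mainth} to the holomorphic setting, gaining in two places: the strong $A(\theta)$-acceptability of $r$ triggers the optimal parabolic estimate (\ref{convergenceBThol}), which needs only $p$ derivatives instead of $p+1$, and the holomorphy of the relevant semigroup drops the stability constant to $C_s(n)=O(1)$ (item (a3) after (\ref{stabilityBT})). The overall architecture of the proof, including the decomposition $\|u(t_n)-u_n\|\le (I)+(II)+(III)$, is the same as in Theorem~\ref{mainth}; only the ingredients change.

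First I would verify that $G_\theta$ generates a holomorphic semigroup on $Z_\theta=X\times Y_\theta$ with the same angle $\theta$: the diagonal is holomorphic ($A$ by assumption, $B_\theta$ trivially as a translation of holomorphic functions), and the off-diagonal $L$ is a bounded perturbation that does not affect the angle, only contributing the $(1+t)e^{\omega^+t}$ growth already noted for $S_G$. The key observation that powers the jump from $p+1$ to $p$ derivatives is the clean identity
\begin{equation*}
G_\theta^{\,k}\,(u_0,f)^{T}=(u^{(k)}(0),\,f^{(k)})^{T},\qquad 0\le k\le p,
\end{equation*}
proved by induction from $u'=Au+f$, using $Au^{(j)}(0)=u^{(j+1)}(0)-f^{(j)}(0)$. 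The same induction shows $(u_0,f)^T\in D(G_\theta^{\,p})$ under the stated regularity, and yields the bound $\|G_\theta^{\,p}(u_0,f)^{T}\|_{Z}\le \|u^{(p)}\|_\infty+\|f^{(p)}\|_\infty$.

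With this in hand, I would apply the optimal parabolic estimate (\ref{convergenceBThol}) (augmented by the factor $(1+t_n)$ appropriate to the linear growth of $S_{G_\theta}$) to the exact iteration $\bar U_n=r^n(\tau G_\theta)U_0$, obtaining the bound on $(I)=\|u(t_n)-\bar u_n\|$. For $(II)$ I reuse (\ref{operatorEaprox}) together with the improved stability $\|r^{n-k}(\tau A)\|\le M\,O(1)\,e^{\omega^+\kappa t_n}$. For $(III)$ the key input is that $B_\theta$ generates a contractive, holomorphic translation semigroup on $Y_\theta$, so (\ref{convergenceBThol}) with $p'=p$ gives
\begin{equation*}
\|f(t_k+\cdot)-r^{\,k}(\tau B_\theta)f\|_\infty\le C_e\,\tau^p\,\|f^{(p)}\|_\infty
\end{equation*}
uniformly in $k$, with no time-growth prefactor; the boundedness of $E(\tau)$ and of the stability powers of $r(\tau A)$ then allow summing $\tau\sum_{k=0}^{n-1}$ to produce a factor $t_n$, absorbed into the $(1+t_n)$ announced in the statement.

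The main technical obstacle is the bootstrapping of holomorphy from $A$ (with angle $\theta$) to $G_\theta$ (with the same $\theta$) while preserving the sectorial resolvent bounds needed to apply (\ref{convergenceBThol}) with $r$ strongly $A(\theta)$-acceptable; this includes checking that the integral representation of $r(\tau G_\theta)$ in terms of resolvents still fits the hypotheses of the Larsson--Thomée--Wahlbin type estimate on $Z_\theta$. A secondary, more bookkeeping point is the induction establishing $G_\theta^{\,k}(u_0,f)^T=(u^{(k)}(0),f^{(k)})^T$, which requires verifying the domain conditions inductively; these are automatic from the equation $u'=Au+f$ once $u\in\mathcal{C}^p$ and $f\in\mathcal{C}^p$.
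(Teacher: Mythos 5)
Your proposal is correct and follows essentially the same route as the paper: the identical three-term decomposition from Theorem~\ref{mainth}, with (\ref{convergenceBThol}) applied to $G_\theta$ for $(I)$, the $C_s(n)=O(1)$ improvement for $(II)$, and (\ref{convergenceBThol}) applied to $B_\theta$ for $(III)$. Your explicit verification that $G_\theta^{\,k}(u_0,f)^T=(u^{(k)}(0),f^{(k)})^T$ and that $G_\theta$ inherits holomorphy is a useful filling-in of details the paper leaves implicit (it simply cites Section~2.3 of \cite{Yagi} for the latter).
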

\begin{proof}
The proof is similar to that of Theorem \ref{mainth}, but in this case we take advantage of the optimal parabolic estimate (\ref{convergenceBThol}) instead of (\ref{convergenceBT}). We split the local error in the same three terms as in (\ref{localerror}). Then, the estimate (\ref{convergenceBThol}) applied to $G_{\theta}$ and initial data $u_0$ and $f$ leads to 
\begin{equation*}
(I) \leq M \, C_e  \left(1 + t_n \right) \,\text{e}^{\omega^+ \kappa t_n} \tau^p \, \left(\|u^{(p)}\|_{\infty} + \|f^{(p)}\|_{\infty}\right).
\end{equation*}
Moreover, as we mentioned in Section 3, in the holomorphic case $C_s(n)$ is $O(1)$, so that 
\begin{equation*}
(II) \leq K \, C_s\,M \, t_n \,\text{e}^{\omega^+ \kappa t_n} \, \tau^p \, \|f^{(p)}\|_{\infty}, \qquad K=K(\mathcal{K}).
\end{equation*}
To conclude, the application of (\ref{convergenceBThol}) to $B_{\theta}$ with initial data $f$ and (\ref{stabilityBT}) gives
\begin{equation*}
(III) \leq M \, C_s\, C_e\, t_n  \text{e}^{\omega^+ \kappa t_n} \, \tau^p \|f^{(p)}\|_{\infty},
\end{equation*}
thus completing the proof.
\end{proof}

Finally, let us comment that the previous theorem can be adapted to cover the situation of $A(\theta)$-acceptable rational mappings and variable step sizes \cite{Pal3}.

\section{Numerical illustrations}
In this section we present several numerical illustrations to show the convergence behaviour of the proposed method. We deal with simple PDEs which are integrated by the method of lines. The space discretization is accomplished by standard finite differences. If $h > 0$ stands for the space-discretization parameter, we are lead to systems of ODEs 
\begin{equation}
\label{IVPsdis}
\left\{ \begin{array}{lcl} u_h'(t) & = & A_h u_h(t) + f_h(t), \quad t \geq 0, \\
u_h(0)  & = & u_{0,h}. 
\end{array} \right.
\end{equation}
To focus on the error due to the time integration we proceed as follows:
\begin{enumerate}
\item We start from a known solution $u(t,x)$ of (\ref{IVP}), corresponding to some source term $f$, so that $u$ takes values in the intermediate space $X_{\nu}$, with $\nu < \nu^*$ ($\nu^*$ given in Section 3).
\item We adjust $f_h$ in such a way that the restriction of $u$ to the discrete mesh is the exact solution of (\ref{IVPsdis}).
\item We accept as reasonable that the order reduction of the RK method applied to (\ref{IVPsdis}) is close to the one determined by $\nu^*$ \cite{Mpaz}.
\item We compare a RK method with its rational version (\ref{metodoaproxshort}).
\end{enumerate}

The illustrations use a constant step size $\tau > 0$ and the auxiliar vectors $\boldsymbol{c_n}$, $n\geq 0$, are chosen to be
\begin{equation}
\boldsymbol{c_n} = \left\{ \begin{array}{l} \left[-n,-n+1,\dots, p-1-n \right] \quad \text{for} \quad n = 1, \dots, p-1 \\
\left[-p+1,-p+2,\dots,0\right] \quad \quad \quad \text{for} \quad n > p-1
\end{array} \right.
\label{dotgrid}
\end{equation}
in such a way that only one function evaluation per step is required for $n \geq 2$.

Concerning the implementation, first of all, we need linear solvers for the involved systems
$$
(I-\tau w_l \, A_h)x_h = y_h \in X_h, \, \, 1 \le l \le k.
$$ 

To get the  $s:=\sum_{\ell=1}^k m_l$ vectors $\boldsymbol{\gamma^n_{\ell,j}} \in \mathbb{R}^p$, $1 \le \ell \le k$, $1 \le j \le m_j$, required in (\ref{operatorEn}), we consider the  expansions 
$$
(1-w_ljz)^{-j} = \sum_{q=0}^{p-1} \binom{-j}{q} w_l^q z^q + O(z^p)
$$
and solve the $s$ corresponding  Vandermonde systems (\ref{Vandermonde}), for $F(z) = (1-w_lz)^{-j}$, $1 \le k$, $1 \le j \le m_j$ and ${\bf c_n}$, $n \ge 1$. It is clear that, in our context, we can restrict the task to solve such systems for $1 \le n \le p$. We must notice that, for relatively large values of $p$, the Lebesgue constant of the equispaced nodes ${\bf c_n}$, $n \ge 1$, is large. In these situations it is much better to consider other families of nodes, for instance the ones relying on Chebyshev nodes. Finally, once we know the  coefficients, (\ref{operatorEn}) is implemented in a Horner like algorithm, by using the available linear solvers.

For instance, the 3-stages Gauss method (see, e.g., \cite{Hairer}) has a rational stability function that can be written in the form
$$
r(z) = \dfrac{r_1}{1- w_1 z}  + \dfrac{r_2}{1- w_2 z} + \dfrac{r_3}{1- w_3 z} - 1,
$$
with $r_{\ell}, w_{\ell} \in \mathbb{R}$, $w_{\ell} > 0$, for $\ell = 1, 2, 3$. As the method has order $p=6$, in each step we fix ${\bf c_n} \in \mathbb{R}^6$ (see (\ref{dotgrid}) for the examples) and obtain three vectors $\boldsymbol{\gamma^n_{\ell}} \in \mathbb{R}^6$ by solving the mentioned linear systems. Then, a step of the method is done by computing
$$
u_{n+1} = \sum_{\ell =1}^3 r_{\ell} \left(I - \tau w_{\ell} A \right)^{-1} \left(u_n + \, \boldsymbol{\gamma^n_{\ell}} \cdot f\left(t_n + \tau \boldsymbol{c_n}\right)\right) - u_n.
$$
\paragraph*{One-dimensional hyperbolic problem} We consider a classic hyperbolic problem in the unit interval with homogeneous boundary conditions,
\begin{equation}
\label{Example3}
\left\{ \begin{array}{lcll} u_t(t,x) & = & -  u_{x}(t,x) + f(t,x), & \quad 0\le t \le 1, \quad 0 \leq x \leq 1, \\
u(0,x)  & = & u_0(x), & \quad 0 \leq x \leq 1, \\
u(t,0) & = & 0, & \quad 0 \leq t \leq 1,
\end{array} \right.
\end{equation}
where $f : [0,1] \times [0,1] \rightarrow \mathbb{C}$, $u_0 : [0,1] \rightarrow \mathbb{C}$. In order to fit the problem in our framework, we take $X = L^2[0,1]$, $A = -d/dx$, $D(A) = \left\lbrace u \in H^1[0,1] : u(0+) = 0 \right\rbrace$. The operator $A$  satisfies  (\ref{semigroupbound}) with $\omega = 0$ and $M=1$. We adjust the data $u_0$ and $f$ in such a way that $u(t,x) = x\text{e}^t$, $0 \leq t, x \leq 1$, is the solution of the problem (\ref{Example3}). According to the results in Section 2, it is straightforward to prove that $u(t, \cdot) \in X_{\nu}$, $0 \leq t \leq 1$, for every $0 < \nu < 1.5$.
   
We discretize (\ref{Example3}) by the method of lines, combining upwind finite diference for the discretization in space and the 3-stage SDIRK method ($p=4$, $q=1$) for the integration in time  (see, e.g., \cite{Hairer}). This method suffers from order reduction, and according to the main result in \cite{IC}, the reduced order turns out to be $p^* = q + \nu +1 = 3.5$. The method is implemented with the same values of $\mathbf{c_n}$ as in (\ref{dotgrid}), leading to the results shown in Table \ref{table5}.

%% Table generated by Excel2LaTeX from sheet 'Hoja1'
%\begin{table}[ht]
%  \centering
%  \caption{Orders for the hyperbolic example solved with Gauss3 method. The symbol * stands for values of $\tau$ when the maximun precission is achieved.}
%    \begin{tabular}{rlrrrrr}
%    \toprule
%    \multicolumn{1}{l}{Method} & Adapted & \multicolumn{1}{l}{$\tau.^{-1} = 20$} & \multicolumn{1}{l}{$\tau.^{-1}  = 40$} & \multicolumn{1}{l}{$\tau.^{-1}  = 80$} & \multicolumn{1}{l}{$\tau.^{-1}  = 160$} & \multicolumn{1}{l}{$\tau.^{-1}  = 320$} \\
%    \hline
%    \multicolumn{1}{l}{Gauss3} & Yes   & 2,70   & 5,89  & 5,92  &  * &  * \\
%          & No    & 5,46  & 5,74  & 5,82  & 5,81  & * \\
%    \hline
%    \end{tabular}%
%    \label{table4}%
%\end{table}%

% Table generated by Excel2LaTeX from sheet 'Hoja1'
\begin{table}[ht]
  \centering
  \caption{Orders for the hyperbolic example solved with SDIRK3 method (h=100). } 
    \begin{tabular}{rlrrrrr}
    \hline
    \multicolumn{1}{l}{Method} & Version  & \multicolumn{1}{l}{$\tau = 1/160$} & \multicolumn{1}{l}{$ 1/240$} & \multicolumn{1}{l}{$ 1/320$} & \multicolumn{1}{l}{$ 1/400$} & \multicolumn{1}{l}{$ 1/480$} \\
    \hline
    \multicolumn{1}{l}{SDIRK3} & Rational   & 3.97  & 3.98  & 3.99  & 3.99  & 3.99 \\
          & RK    & 2.89  & 3.17  & 3.34  & 3.45  & 3.52 \\
    \hline
    \end{tabular}%
    \label{table5}%
\end{table}%

\paragraph*{One-dimensional parabolic problem} Then, we introduce the one-dimensional heat equation with homogeneous boundary conditions
\begin{equation}
\label{Example1}
\left\{ \begin{array}{lcll} u_t(t,x) & = & u_{xx}(t,x) + f(t,x), & \quad 0\le t \le 1, \quad 0 \leq x \leq 1, \\
u(0,x)  & = & u_0(x), & \quad 0 \leq x \leq 1, \\
u(t,0) & = & 0, & \quad 0 \leq t \leq 1, \\
u(t,1) & = & 0, & \quad 0 \leq t \leq 1,
\end{array} \right.
\end{equation}
where $f : [0,1] \times [0,1] \rightarrow \mathbb{C}$, $u_0 : [0,1] \rightarrow \mathbb{C}$. In this case we consider $X = L^2[0,1]$, $A = d^2/dx^2$, $D(A) = H^2[0,1] \cap H^1_0[0,1]$. It is known that under these considerations the operator $A$ satisfies (\ref{semigroupbound}) with $\omega = 0$ and $M=1$. We adjust the data $u_0$ and $f$ in such a way that $u(t,x) = (1-x)\sin(tx)\text{e}^{t^2x}$, $0 \leq t \leq 1$, $0\leq x\leq 1$, is the solution of problem (\ref{Example1}). In this case, the results in Section 2 prove that $u(t, \cdot) \in X_{\nu}$, $0 \leq t \leq 1$, for every $0 < \nu < 1.25$.

The problem (\ref{Example1}) is discretized combining centered finite diference for the discretization in space and either the 3-stages Gauss method ($p=6$, $q = 3$) or the 3-stage SDIRK method ($p=4$, $q=1$) for the integration in time. According to \cite{IC}, the reduced orders are $p^* = q + \nu +1 = 5.25$ for the Gauss3 and $p^* = q + \nu +1 = 3.25$ for the SDIRK3. Table \ref{table1} shows the numerical orders obtained, which are in good agreement with the expected orders.

\begin{table}[ht]
  \centering
  \caption{Orders for the one-dimensional parabolic example (h=100). The symbol * stands for values of $\tau$ when the maximun precission is achieved.} 
    \begin{tabular}{rlrrrrr}
    \hline
    \multicolumn{1}{l}{Method} & Version & \multicolumn{1}{l}{$\tau = 1/20$} & \multicolumn{1}{l}{$1/40$} & \multicolumn{1}{l}{$ 1/80$} & \multicolumn{1}{l}{$1/160$} & \multicolumn{1}{l}{$ 1/320$} \\
    \hline
    \multicolumn{1}{l}{Gauss3} & Rational   & 5.52  & 5.85  & 5.83  & 5.96  & 5.98 \\
          & RK    & 4.99  & 5.14  & 5.20   & 5.14  & * \\
    \multicolumn{1}{l}{SDIRK3} & Rational   & 3.73  & 3.87  & 3.90   & 3.91  & 3.92 \\
          & RK    & 2.49  & 2.67 & 2.89  & 3.07   & 3.23 \\
    \hline
    \end{tabular}%
  \label{table1}%
\end{table}%

\paragraph*{Two-dimensional parabolic problem} Finally, we study the two dimensional problem in the square domain $\Omega = (0,1) \times (0,1)$, again with homogeneous Dirichlet boundary conditions,
\begin{equation}
\label{Example2}
\left\{ \begin{array}{lcll} u_t(t,x,y) & = & \Delta u(t,x,y) + f(t,x,y), & \quad 0\le t \le 1, \quad (x,y) \in \Omega, \\
u(0,x,y)  & = & u_0(x,y), & \quad (x,y) \in \Omega, \\
u(t,x,y) & = & 0, & \quad (x,y) \in \partial\Omega, \\
\end{array} \right.
\end{equation}
where $f : [0,1] \times \bar{\Omega} \rightarrow \mathbb{C}$, $u_0 : \bar{\Omega} \rightarrow \mathbb{C}$. We consider $X = L^2\left(\Omega\right)$, $A = \Delta$, $D(A) = H^2\left(\Omega\right) \cap H^1_0\left(\Omega\right)$. The bound (\ref{semigroupbound}) holds with $\omega = 0$ and $M=1$. We adjust the data $u_0$ and $f$ in such a way that $u(t,x,y) = x^3 y (x-1) (y-1)^3 \text{e}^t$, $0 \leq t \leq 1$, $(x,y) \in \bar{\Omega}$, is the solution of the problem (\ref{Example2}).

By the same reasoning, the main result in \cite{Lofs} guarantees that $u(t, \cdot,\cdot) \in X_{\nu}$, $0 \leq t \leq 1$, for every $0 < \nu < 1.25$, so the expected orders of convergence of the Gauss3 and SDIRK3 methods are the same as in the previous one-dimensional case. The method is implemented with the same values of $\mathbf{c_n}$ as in (\ref{dotgrid}).

% Table generated by Excel2LaTeX from sheet 'Hoja1'
\begin{table}[ht]
  \centering
  \caption{Orders for the 2D-parabolic example solved with Gauss3 method (h=100). The symbol * stands for values of $\tau$ when the maximun precission is achieved. } 
    \begin{tabular}{rlrrrrr}
    \hline
    \multicolumn{1}{l}{Method} & Version & \multicolumn{1}{l}{$\tau = 1/30$} & \multicolumn{1}{l}{$ 1/45$} & \multicolumn{1}{l}{$ 1/60$} & \multicolumn{1}{l}{$ 1/75$} & \multicolumn{1}{l}{$ 1/90$} \\
    \hline
    \multicolumn{1}{l}{Gauss3} & Rational   & 6.02  & 6.14  & 6.14  & 6.08  & * \\
          & RK    & 5.16  & 5.25  & 5.32  & 5.31  & 5.23 \\
    \hline
    \end{tabular}%
    \label{table2}%
\end{table}%

% Table generated by Excel2LaTeX from sheet 'Hoja1'
\begin{table}[ht]
  \centering
  \caption{Orders for the 2D-parabolic example solved with SDIRK3 method (h=100). }  
    \begin{tabular}{rlrrrrr}
    \hline
    \multicolumn{1}{l}{Method} & Version & \multicolumn{1}{l}{$\tau = 1/40$} & \multicolumn{1}{l}{$1/80$} & \multicolumn{1}{l}{$1/160$} & \multicolumn{1}{l}{$1/320$} & \multicolumn{1}{l}{$1/640$} \\
    \hline
    \multicolumn{1}{l}{SDIRK3} & Rational   & 4.02  & 4.06  & 4.01  & 3.94  & 3.97 \\
          & RK    & 2.55  & 2.76  & 2.99  & 3.17  & 3.27 \\
    \hline
    \end{tabular}%
    \label{table3}%
\end{table}%


\begin{thebibliography}{50}
\bibitem{Isaias}{\sc I.~Alonso-Mallo}, {\em Rational methods with optimal order of convergence for partial differential equations}, {Appl. Numer. Math.}, {35} (2000), pp.~265--292.

\bibitem{IC} {\sc I.~Alonso-Mallo, C.~Palencia}, {\em Optimal orders of convergence for Runge–Kutta methods and linear, initial boundary value problems}, {Appl. Numer. Math.}, {44} (2003), pp.~1--19.

\bibitem{BrTh0} {\sc P.~Brenner, V.~Thomée}, {\em Stability and convergence rates in $L_p$ for certain difference schemes}, {Math. Scand.}, {26} (1970), pp.~5--23.

\bibitem{BrTh} {\sc P.~Brenner, V.~Thomée}, {\em On rational approximations of semigroups}, {SIAM J. Numer. Anal.}, {16.4} (1979), pp.~683--694.

\bibitem{Mpaz}{\sc M. P.~Calvo, C.~Palencia}, {\em Avoiding the order reduction of Runge-Kutta methods for lineal initial boundary value problems}, {Math. Comput.}, {71.240} (2001), pp.~1529--1543.

\bibitem{Crouzeix}{\sc M.~Crouzeix, S.~Larsson, S.~Piskarev, and V.~Thom\'ee}, {\em The stability of rational approximations of analytic semigroups}, {BIT Numer. Math.}, {33} (1993), pp.~74--84.

\bibitem{DT} {\sc N.~Dunford, J. T.~Schwartz}, {Linear Operators, Part 1: General Theory}, John Wiley and Sons, 1988.

\bibitem{Engel} {\sc K. J.~Engel, R.~Nagel}, {A Short Course on Operator Semigroups}, Springer Science and Business Media (2006).

\bibitem{Fabrizio} {\sc M.~Fabrizio, C.~Giorgi, and V.~Pata}, {\em A new approach to equations with memory}, Arch. Ration. Mech. Anal.,
{198.1} (2010), pp.~189--232.

\bibitem{Hairer} {\sc E.~Hairer, G.~Wanner}, {Solving Ordinary Differential Equations II}, Springer-Verlag, Berlin, 1991.

\bibitem{HK} {\sc R.~Hersh, T.~Kato}, {\em High-accuracy stable difference schemes for well-posed initial-value problems}, {SIAM J. Numer. Anal.}, {16.4} (1979), pp.~670--682.

\bibitem{Hille} {\sc E.~Hille, R. S.~Phillips}, {Functional Analysis and Semi-groups}, Vol. 31, American Mathematical Society, 1996.

\bibitem{LTW} {\sc S.~Larsson, V.~Thomée, and L. B.~Wahlbin}, {\em Finite-element methods for a strongly damped wave equation}, {IMA J. Numer. Anal.}, {11.1} (1991), pp.~115--142.

\bibitem{Lofs} {\sc J.~L\"{o}fstr\"{o}m}, {\em Interpolation of boundary value problems of Neumann type on smooth domains}, {J Lond. Math. Soc.}, {2.3} (1992), pp.~499--516.

\bibitem{Lunardi} {\sc A.~Lunardi}, {Interpolation theory}, Scuola Normale Superiore, Pisa, third edition, 2018.

\bibitem{Ostermann} {\sc A.~Ostermann, M.~Roche}, {\em Runge-Kutta methods for partial differential equations and fractional orders of convergence}, {Math. Comput.}, {59.200} (1992), pp.~403--420.

\bibitem{Ostermann2} {\sc A.~Ostermann, M.~Roche}, {\em Rosenbrock methods for partial differential equations and fractional orders of convergence}, {SIAM J. Numer. Anal.}, {30.4} (1993), pp.~1084--1098.

\bibitem{Pal1} {\sc C.~Palencia}, {\em A stability result for sectorial operators in Banach spaces}, {SIAM J. Numer. Anal.}, {30.5} (1993), pp.~1373--1384.

\bibitem{Pal2} {\sc C.~Palencia}, {\em On the stability of variable stepsize rational methods for holomorphic semigroups}, {Math. Comput.}, {62.205} (1994), pp.~93--103.

\bibitem{Pal3} {\sc C.~Palencia}, {\em Stability of rational multistep approximations of holomorphic semigroups}, {Math. Comput.},  {64.210} (1995),  pp.~591--599.

\bibitem{Pazy} {\sc A.~Pazy}, {Semigroups of Linear Operators and Applications to Partial Differential Equations}, Vol. 44, Springer Science and Business Media, 2012.

\bibitem{Roberts} {\sc S.~Roberts, A.~Sandu}, {\em Eliminating order reduction on linear, time-dependent ODEs with GARK methods},  arXiv preprint arXiv:2201.07940, (2022).

\bibitem{Triebel} {\sc H.~Triebel}, {Interpolation Theory, Function Spaces, Differential Operators}, North Holland, 1978.

\bibitem{Yagi} {\sc A.~Yagi}, {Abstract Parabolic Evolution Equations and their Applications}, Springer Science and Business Media, 2009.

\end{thebibliography}
\end{document}